\newcommand{\excise}[1]{}%{$\star$\textsc{#1}$\star$}
\newtheorem{thm}{Theorem}[section]
\newtheorem{lemma}[thm]{Lemma}
\newtheorem{cor}[thm]{Corollary}
\theoremstyle{definition}
\numberwithin{equation}{section}
\newcommand\NN{\mathbb{N}}
\begin{document}%%%%%%%%%%%%%%%%%%%%%%%%%%%%%%%%%%%%%%%%%%%%%%%%%%%%%%%%
%%%%%%%%%%%%%%%%%%%%%%%%%%%%%%%%%%%%%%%%%%%%%%%%%%%%%%%%%%%%%%%%%%%%%%%%

\mbox{}
%\vspace{-2ex}%-1.1743pt}
\title[Unboundedness of irreducible decompositions]{Unboundedness of irreducible decompositions of numerical semigroups}

\author[Bogart]{Tristram Bogart}
\address{Departamento de Matem\'aticas \\ Universidad de los Andes \\ Bogot\'a, Colombia}
\email{tc.bogart22@uniandes.edu.co}

\author[Seyed Fakhari]{Seyed Amin Seyed Fakhari}
\address{Departamento de Matem\'aticas \\ Universidad de los Andes \\ Bogot\'a, Colombia}
\email{s.seyedfakhari@uniandes.edu.co}
\date{\today}

\begin{abstract}
We present two families of numerical semigroups and show that for each family, the number of required components in an irreducible decomposition cannot be bounded by any given integer. This gives a negative answer to a question raised by Delgado, Garc\'ia-S\'anchez and Rosales.
\end{abstract}

\maketitle

% \setcounter{tocdepth}{1}
% \tableofcontents

\section{Introduction}\label{sec:intro}
Let $\mathbb{N}$ denote the set of nonnegative integers. A subset $S\subseteq \mathbb{N}$ is called a {\it numerical semigroup} if (i) $S$ is closed under addition, (ii) $0\in S$ and (iii) $\mathbb{N}\setminus S$ is a finite set. Any element in $\mathbb{N}\setminus S$ is called a {\it gap} of $S$ and the largest gap of $S$ is the {\it Frobenius number} of $S$, denoted by $F(S)$. For a nonempty subset $A\subseteq \mathbb{N}$, set $$\langle A\rangle=\{c_1x_1+\cdots +c_nx_n\mid n\in \mathbb{N}, c_i\in \mathbb{N}, x_i\in A\}.$$ Then $\langle A\rangle$ is a numerical semigroup if and only if the greatest common divisor ${\rm gcd}(A)$ is equal to one. In this case $\langle A\rangle$ is called the {\it numerical semigroup generated by} $A$, and $A$ is called a {\it generating set} of $\langle A\rangle$. It is well-known that any numerical semigroup admits a unique minimal (under inclusion) generating set. Let $S$ be a numerical semigroup and let $A$ be the unique minimal generating set of $S$. Then the smallest integer in $A$ is called the {\it multiplicity} of $S$.

A numerical semigroup $\Gamma$ is {\it irreducible} if it cannot be written as the intersection of two numerical semigroups which properly contain $\Gamma$. It turns out (\cite[Proposition 4.44]{rg}) that any numerical semigroup $S$ can be decomposed as the intersection of finitely many irreducible numerical semigroups. Such a decomposition is called an {\it irreducible} decomposition of $S$. The irreducible numerical semigroups appearing in such a decomposition are called the {\it irreducible components} of the intersection. In \cite[Page 4]{dgr}, the authors ask whether there is an integer $N$ such that every numerical semigroup has an irreducible decomposition with at most $N$ irreducible components. The goal of this paper is to give a negative answer to this question.

Indeed, in Section \ref{sec:constructions}, we construct two distinct families of numerical semigroups and show that for each family, the number of required components in an irreducible decomposition is unbounded. The first family has ``low'' multiplicity compared to the Frobenius number, while the second, being the semigroup $\NN^{\geq n}$ for certain values of $n$, has the largest possible multiplicity for a given Frobenius number.

\section{Two Constructions} \label{sec:constructions}
In this section we provide two families of numerical semigroups and  prove that for each family, the number of required components in a irreducible decomposition is unbounded. But first, we need to mention some definitions and basic facts.

Let $S$ be a numerical semigroup. If $F(S)$ is an odd integer then $S$ is called {\it symmetric} whenever $x\in \mathbb{N}\setminus S$ implies $F(S)-x\in S$. Similarly, if $F(S)$ is an even integer then $S$ is called {\it pseudo-symmetric} whenever $x\in \mathbb{N}\setminus (S\cup \{\frac{F(S)}{2}\})$ implies $F(S)-x\in S$. We know from \cite[Proposition 2]{rb2} that a numerical semigroup is irreducible if and only if it is either symmetric or pseudo-symmetric.

Let $S$ be a numerical semigroup. An integer $x\in \mathbb{Z}$ is called a {\it pseudo-Frobenius number} of $S$ if $x\notin S$ but $x+s$ belongs to $S$ for each nonzero element $s\in S$. The set of pseudo-Frobenius numbers of $S$ will be denoted by $PF(S)$. It is obvious that $F(S)\in PF(S)$. Define $$BPF(S):=\left\{a\in PF(S)\mid a > \frac{F(S)}{2}\right\}.$$ Moreover, for each element $a\in BPF(S)$, set$$\xi(a):=\{a+t \mid t\in \mathbb{N} \ {\rm and} \ a+t \notin \langle S,t\rangle\}.$$Rosales and Branco \cite{rb} determined a lower bound and an upper bound for the minimum number of components in an irreducible decomposition of $S$. Here is their result.

\begin{thm} [\cite{rb}, Corollaries 10 and 18] \label{bounds}
Let $S$ be a numerical semigroup and suppose $BPF(S)=\{a_1, \ldots, a_m\}$ Then the following statements hold.
\begin{itemize}
\item[(i)] $S$ admits an irreducible decomposition with at most $m$ irreducible components.
\item[(ii)] Set $h(S):=\min\big\{\#\{b_1, \ldots, b_m\}\mid (b_1, \ldots, b_m)\in \xi(a_1)\times \cdots\times \xi(a_m)\big\}$. Then every irreducible decomposition of $S$ has at least $h(S)$ irreducible components.
\end{itemize}
\end{thm}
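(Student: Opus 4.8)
The plan is to establish the upper bound (i) and the lower bound (ii) separately, both resting on the standard characterization that a numerical semigroup is irreducible if and only if it is maximal, with respect to inclusion, among all numerical semigroups having the same Frobenius number. Combined with the symmetric/pseudo-symmetric description recalled above, this is the only input I expect to need beyond the definitions.

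For the lower bound (ii), I would take an arbitrary irreducible decomposition $S=\Gamma_1\cap\cdots\cap\Gamma_k$ and fix $a_i\in BPF(S)$. Since $a_i\notin S$, there is an index $j(i)$ with $a_i\notin\Gamma_{j(i)}$; write $f_i=F(\Gamma_{j(i)})$. Because $S\subseteq\Gamma_{j(i)}$ we have $f_i\le F(S)$, and because $a_i$ is a gap of $\Gamma_{j(i)}$ we have $a_i\le f_i$; crucially $a_i>F(S)/2\ge f_i/2$, so $a_i\neq f_i/2$. As $\Gamma_{j(i)}$ is symmetric or pseudo-symmetric and $a_i$ is one of its gaps different from $f_i/2$, the (pseudo-)symmetry forces $t_i:=f_i-a_i\in\Gamma_{j(i)}$. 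Then $\langle S,t_i\rangle\subseteq\Gamma_{j(i)}$, and since $f_i\notin\Gamma_{j(i)}$ I conclude $a_i+t_i=f_i\notin\langle S,t_i\rangle$, that is, $b_i:=f_i\in\xi(a_i)$. Thus $(b_1,\dots,b_m)\in\xi(a_1)\times\cdots\times\xi(a_m)$, whence $\#\{b_1,\dots,b_m\}\ge h(S)$. Finally $b_i=F(\Gamma_{j(i)})$ depends only on the component $\Gamma_{j(i)}$, so the number of distinct values among the $b_i$ is at most the number of distinct components used, which is at most $k$; therefore $k\ge h(S)$. The one delicate point here is the inequality $a_i\neq f_i/2$: this is exactly why the statement restricts to the \emph{big} pseudo-Frobenius numbers, since for a small $a_i$ it could coincide with the pseudo-symmetric middle of a component and then $f_i-a_i$ need not lie in $\Gamma_{j(i)}$.

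For the upper bound (i), I would, for each $a_i\in BPF(S)$, let $\Gamma_i$ be a numerical semigroup maximal among those $T$ with $S\subseteq T$ and $a_i\notin T$. A short argument (adjoining the Frobenius number to any strictly larger competitor) shows $F(\Gamma_i)=a_i$ and that $\Gamma_i$ is maximal among semigroups with Frobenius number $a_i$, hence irreducible; moreover $S\subseteq\bigcap_i\Gamma_i$ is automatic. It then remains to prove $\bigcap_{i=1}^m\Gamma_i=S$, i.e.\ that every gap of $S$ is excluded by some $\Gamma_i$.

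This last step is where I expect the real difficulty. If a gap $z$ satisfies $a_i-z\in S$ for some big $a_i$, then $z\in\Gamma_i$ would force $a_i=z+(a_i-z)\in\Gamma_i$, a contradiction, so $z\notin\Gamma_i$; more generally $z\notin\Gamma_i$ whenever $a_i\in\langle S,z\rangle$. The obstacle is that a gap may be dominated, in the order $x\le_S y\iff y-x\in S$, only by \emph{small} pseudo-Frobenius numbers, and then no such $a_i$ is available directly; indeed a small pseudo-Frobenius number $b$ is itself such a gap. One already sees in examples that a careless choice of the maximal semigroups $\Gamma_i$ can leave such a $b$ inside every component, so that the intersection strictly contains $S$. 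The heart of the proof is therefore to make the choices coherently: each such $b$ must be assigned to some big $a_i$ and the corresponding $\Gamma_i$ chosen, still maximal with $a_i\notin\Gamma_i$, so that $a_i-b$ is placed inside $\Gamma_i$, which forces $b\notin\Gamma_i$ since $b+(a_i-b)=a_i\notin\Gamma_i$. Carrying out this assignment for all gaps simultaneously while keeping exactly one component per element of $BPF(S)$ is the crux that yields the bound $m$, after which the count is immediate because the components are indexed by $BPF(S)$.
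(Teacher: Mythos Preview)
The paper does not prove this theorem: it is quoted verbatim as Corollaries~10 and~18 of Rosales--Branco and used as a black box, so there is no proof in the paper to compare your proposal against.

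That said, a brief assessment of your attempt on its own merits. Your argument for~(ii) is complete and correct: the crucial inequality $a_i>F(S)/2\ge f_i/2$ is exactly what rules out the pseudo-symmetric middle, and the passage from $t_i\in\Gamma_{j(i)}$ to $b_i=f_i\in\xi(a_i)$ is clean. Your treatment of~(i), however, is only a sketch. You correctly locate the obstacle---gaps, in particular small pseudo-Frobenius numbers $b\le F(S)/2$, that are $\le_S$-dominated by no element of $BPF(S)$---and you correctly observe that an arbitrary choice of maximal oversemigroups $\Gamma_i$ can leave such a $b$ in every component. But you do not actually carry out the ``coherent assignment'' you allude to: you have not explained which big $a_i$ a given small $b$ should be paired with, nor why one can simultaneously force $a_i-b\in\Gamma_i$ for all the relevant pairs while keeping each $\Gamma_i$ maximal with Frobenius number $a_i$. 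That is the substance of the Rosales--Branco argument, and as written your proposal stops just short of it.
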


We are now ready to present our first construction.

\begin{thm} \label{thm:lowmultiplicity}
Let $k \geq 2$ be an integer and suppose that $d$ is the smallest prime divisor of $k$. For any integer $n \geq k^2-1$ such that $k$ does not divide $n-1$, consider the numerical semigroup
\[ S_{k,n} = \langle k, n, n+1, \dots, n+k-1 \rangle. \]
Then every irreducible decomposition of $S_{k,n}$ has at least $d-1$ irreducible components.
\end{thm}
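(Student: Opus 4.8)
The plan is to apply Theorem~\ref{bounds}(ii): it suffices to prove that $h(S_{k,n}) \geq d-1$. When $d = 2$ the statement only asserts that there is at least one component, which is trivial, so I assume $d \geq 3$ throughout, whence $k$ is odd and $k \geq 3$. First I would pin down the semigroup itself. Since $\langle k \rangle$ already contributes every multiple of $k$ while the generators $n, n+1, \dots, n+k-1$ are $k$ consecutive integers, hence a full set of residues mod~$k$, adjoining copies of $k$ shows that $[n,\infty) \subseteq S_{k,n}$. Thus
\[ S_{k,n} = \{\, x \in \NN : k \mid x \text{ or } x \geq n \,\}, \]
and the hypothesis $k \nmid (n-1)$ guarantees that $n-1$ is a gap, so $F(S_{k,n}) = n-1$.

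Next I would compute the pseudo-Frobenius numbers via the Ap\'ery set. The set $\Ap(S_{k,n}, k)$ consists of $0$ together with the $k-1$ elements of $\{n, \dots, n+k-1\}$ of nonzero residue, and since any two of these differ by less than $k$ they are pairwise incomparable in the order $x \preceq y \iff y - x \in S_{k,n}$. Hence every nonzero Ap\'ery element is maximal, and subtracting $k$ gives
\[ PF(S_{k,n}) = \{\, x \in [n-k, n-1] : k \nmid x \,\}, \]
the non-multiples of $k$ in the top window. The bound $n \geq k^2 - 1$ makes every such $x \geq n-k$ exceed $F(S_{k,n})/2 = (n-1)/2$, so $BPF(S_{k,n}) = PF(S_{k,n})$ and $m = k-1$.

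The heart of the argument is to show that the pseudo-Frobenius numbers nearest the top have singleton $\xi$-sets: for every $a \in PF(S_{k,n})$ with $a \geq n-d$ I claim $\xi(a) = \{a\}$. Taking $t = 0$ shows $a \in \xi(a)$, so I must rule out every $a + t$ with $t \geq 1$; values $a + t \geq n$ lie in $S_{k,n} \subseteq \langle S_{k,n}, t\rangle$ and are automatically excluded, so only $1 \leq t \leq n-1-a \leq d-1$ remains. For such $t$ we have $\gcd(t,k) = 1$, because any common factor would be a divisor of $k$ that is at most $t \leq d-1$, contradicting that $d$ is the least prime factor of $k$. I would then exhibit $a + t$ as an element of $\langle S_{k,n}, t\rangle$, which removes it from $\xi(a)$. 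Concretely, since $\gcd(t,k)=1$ and $k \nmid a$, the congruence $\ell t \equiv a \pmod k$ has a solution $\ell_0 \in \{1, \dots, k-1\}$, and then $\ell_0 t \leq (k-1)(d-1) \leq (k-1)^2 \leq n - k \leq a$, where the penultimate inequality is exactly where $n \geq k^2-1$ is used. Hence $a - \ell_0 t$ is a nonnegative multiple of $k$, so $a - \ell_0 t \in S_{k,n}$, and $a + t = (a-\ell_0 t) + (\ell_0+1)t \in \langle S_{k,n}, t\rangle$. This membership step is the main obstacle: it is where the size hypothesis on $n$ is essential, ensuring that from $a$ one can always ``reach back'' to a multiple of $k$ along an arithmetic progression of step $t$.

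Finally I would count. The interval $[n-d, n-1]$ contains $d$ consecutive integers and, since $d \leq k$, at most one multiple of $k$; hence it contains at least $d-1$ pseudo-Frobenius numbers, each with $\xi(a) = \{a\}$ by the previous step. In any tuple $(b_1, \dots, b_m) \in \xi(a_1)\times\cdots\times\xi(a_m)$ the coordinates indexed by these $a$ are forced to equal $a$ itself, contributing at least $d-1$ distinct values. Therefore $h(S_{k,n}) \geq d-1$, and Theorem~\ref{bounds}(ii) yields that every irreducible decomposition of $S_{k,n}$ has at least $d-1$ components.
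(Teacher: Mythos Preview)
Your proof is correct and takes a genuinely different route from the paper. The paper does not go through Theorem~\ref{bounds}(ii) at all; instead it proves an ad~hoc lemma (Lemma~\ref{irredgap}) saying that if $I \supseteq S_{k,n}$ is irreducible and $n-i$ is a gap of $I$ for some $1 \le i \le d-1$, then necessarily $F(I) = n-i$. The proof of that lemma uses (pseudo\nobreakdash-)symmetry to force $i-j \in I$ and then bounds $F(I) \le F(\langle i-j, k\rangle)$ via the two\nobreakdash-generator Frobenius formula, reaching a contradiction with $n \ge k^2-1$. From the lemma it follows that components witnessing the gaps $n-1,\dots,n-(d-1)$ have pairwise distinct Frobenius numbers, hence are distinct. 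You instead compute $PF(S_{k,n})$ via the Ap\'ery set and show $\xi(a)=\{a\}$ for each pseudo\nobreakdash-Frobenius number $a$ in $[n-d,n-1]$ by exhibiting $a+t=(a-\ell_0 t)+(\ell_0+1)t \in \langle S_{k,n},t\rangle$; this feeds directly into the bound $h(S_{k,n}) \ge d-1$. The underlying arithmetic---coprimality of small $t$ with $k$, together with the size constraint $n \ge k^2-1$---is essentially the same in both arguments, but your packaging via $h(S)$ is a legitimate alternative to the paper's direct lemma. One modest bonus of your version: by counting pseudo\nobreakdash-Frobenius numbers in the length\nobreakdash-$d$ window $[n-d,n-1]$ and noting that at most one is a multiple of $k$, you cleanly cover the situation where some $n-i$ with $2 \le i \le d-1$ already lies in $S_{k,n}$, a case the paper's proof does not explicitly address.
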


Note that the Frobenius number of $S_{k,n}$ is $n-1$. In order to prove Theorem \ref{thm:lowmultiplicity}, we need the following lemma.

\begin{lemma} \label{irredgap}
Let $I$ be an irreducible semigroup that contains $S_{k,n}$ and $i \in \{1,2, \dots d-1\}$. If $n-i$ is a gap of $I$, then $F(I) = n-i$.
\end{lemma}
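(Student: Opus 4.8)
The plan is to pin down $F(I)$ by combining three ingredients: the coarse structure of $S_{k,n}$, the symmetric/pseudo-symmetric dichotomy for irreducible semigroups, and the fact that $d$ being the smallest prime divisor of $k$ forces small positive integers to be coprime to $k$.

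First I would record the basic structural fact that every integer $\ge n$ lies in $S_{k,n}$: the generators $n, n+1, \dots, n+k-1$ are $k$ consecutive integers, hence represent every residue class modulo $k$, and adding multiples of the generator $k$ then fills in everything from $n$ onward. Since $S_{k,n} \subseteq I$, this gives $F(I) \le n-1$. As $n-i$ is assumed to be a gap of $I$, we also have $F(I) \ge n-i$, so I may write $F(I) = n - i'$ for an integer $i'$ with $1 \le i' \le i$. The goal then becomes showing $i' = i$, so I argue by contradiction and assume $i' < i$, setting $m := i - i'$, which satisfies $1 \le m \le d-2$ (using $i \le d-1$ and $i' \ge 1$).

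Next I would invoke irreducibility: by the criterion recalled before Theorem~\ref{bounds}, $I$ is symmetric or pseudo-symmetric. Applying the defining reflection at the gap $n-i$ yields $F(I) - (n-i) \in I$, and since $F(I) - (n-i) = (n-i') - (n-i) = m$, this shows $m \in I$. In the pseudo-symmetric case one must first exclude the exceptional value $F(I)/2$: I would check that $n - i \ne F(I)/2$, for otherwise $F(I) = 2(n-i) \le n-1$ would force $n \le 2i-1 \le 2(d-1)-1 \le 2k-3$, contradicting $n \ge k^2 - 1$ (since $(k-1)^2 + 1 > 0$). Thus $m \in I$ in all cases. Finally I would exploit the hypothesis on $d$: because $1 \le m \le d-2 < d$ and every prime divisor of $k$ is at least $d$, the integers $m$ and $k$ share no common prime factor, so $\gcd(m,k) = 1$. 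As $m, k \in I$, the whole numerical semigroup $\langle m, k\rangle$ lies in $I$, and its Frobenius number is $mk - m - k$. A direct estimate using $m \le d-2 \le k-2$ gives $mk - m - k < k^2 - 2k$, while $n - i \ge (k^2 - 1) - (d-1) = k^2 - d \ge k^2 - k$; hence $n - i > mk - m - k$, so $n - i \in \langle m, k\rangle \subseteq I$. This contradicts $n-i$ being a gap, forcing $i' = i$ and $F(I) = n-i$.

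I expect the main obstacle to be the pseudo-symmetric exceptional value $F(I)/2$: the reflection identity holds only for gaps different from $F(I)/2$, so the quantitative input $n \ge k^2 - 1$ must be used twice, both to exclude $n - i = F(I)/2$ and in the final Frobenius-number comparison. The coprimality observation $\gcd(m,k)=1$, which is precisely where the smallest-prime-divisor hypothesis enters, is the conceptual crux that converts the reflected element $m$ into a contradiction.
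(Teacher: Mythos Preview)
Your proof is correct and follows essentially the same route as the paper: bound $F(I)$ between $n-i$ and $n-1$, use the symmetric/pseudo-symmetric reflection to place the small element $i-i'$ in $I$, observe it is coprime to $k$ because it is below the least prime divisor $d$, and then use the Frobenius number of $\langle i-i',k\rangle$ to derive a numerical contradiction with $n\ge k^2-1$. The one point where you are more careful than the paper is your explicit exclusion of the pseudo-symmetric exceptional value $n-i=F(I)/2$; the paper simply asserts the reflection identity without treating this case, so your argument is in fact slightly more complete.
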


\begin{proof}
Since $I \supset S_{k,n}$, $F(I) \leq F(S_{k,n}) = n - 1$, and since $n-i$ is a gap of $I$, $F(I) \geq n-i$. So $F(I) = n - j$ for some $1 \leq j \leq i$. By symmetry or pseudosymmetry, we must have
  \[ F(I) - (n-i) = (n-j)-(n-i) = i-j \in I .\]

  If $j < i$, then $1 \leq i-j \leq d-2$. In particular, since $d$ is the smallest prime divisor of $k$, $i-j$ and $k$ are coprime. Then since $i-j \in I$ and $k \in S_{k,n} \subseteq I$, we have $F(I) \leq F \left( \langle i-j, k \rangle \right)$.

  Now for any coprime pair $a, b$, it is well-known (\cite[Proposition 2.13]{dgr}) that $F(\langle a, b \rangle) = ab-a-b$. Thus
  \[ F(I) \leq (i-j)k - (i-j) - k  < (i-j)k \leq (k-2)k = k^2-2k \]
which is impossible since $F(I) = n-1 \geq k^2-2$. Thus $j$ must equal $i$.
\end{proof}

Using Lemma \ref{irredgap}, we are able to prove Theorem \ref{thm:lowmultiplicity}.

\begin{proof}[Proof of Theorem \ref{thm:lowmultiplicity}]
Fix $1 \leq i \leq d-1$. Any irreducible decomposition of $S_{k,n}$ includes at least one irreducible $I_i \supset S_{k,n}$ such that $n-i$ is a gap of $I$. By the lemma, $F(I_i) = n-i$. Thus the irreducible components $I_1, I_2, \dots, I_{d-1}$ are all distinct.
\end{proof}

One case of Theorem \ref{thm:lowmultiplicity}, that maximizes the number of irreducible components while nearly minimizing the Frobenius number, occurs when $k=p$ is prime and $n = p^2$. Furthermore, the lower bound on the number of components turns out to be sharp in this case.

\begin{cor} Let $p$ be a prime and consider the numerical semigroup
  \[ S = \langle p, p^2+1, p^2+2, \dots, p^2+p-1 \rangle \]
  of Frobnius number $p^2-1$. Then the minimum number of components in an irreducible decomposition of $S$ is exactly $p-1$.
\end{cor}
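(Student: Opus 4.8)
The strategy is to sandwich the minimum number of components between a lower and an upper bound that both equal $p-1$. The lower bound is immediate from the previous theorem: taking $k=p$ and $n=p^2$ in Theorem~\ref{thm:lowmultiplicity}, the hypotheses hold because $p$ is prime (so its smallest prime divisor is $d=p$), because $n=p^2\geq p^2-1=k^2-1$, and because $p$ does not divide $n-1=p^2-1$. Hence every irreducible decomposition of $S$ has at least $d-1=p-1$ components, and it remains only to exhibit a decomposition with \emph{at most} $p-1$ components. By Theorem~\ref{bounds}(i) this will follow once I show $\#BPF(S)=p-1$.

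For the upper bound the plan is to compute $PF(S)$ explicitly through the Apéry set with respect to the multiplicity, which here is $p$. First I would establish that
\[ \Ap(S,p)=\{0\}\cup\{p^2+1,\,p^2+2,\,\dots,\,p^2+p-1\}. \]
Indeed, the only elements of $S$ below $p^2$ are the multiples $0,p,\dots,(p-1)p$, all of residue $0$ modulo $p$, whereas for each residue $r\in\{1,\dots,p-1\}$ the generator $p^2+r$ is the smallest element of $S$ of that residue. Next I would put the partial order $x\preceq y\iff y-x\in S$ on $\Ap(S,p)$ and observe that any two distinct nonzero Apéry elements $p^2+r$ and $p^2+r'$ differ by a positive integer at most $p-2$, which is strictly smaller than the multiplicity $p$ and hence not in $S$. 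Thus the nonzero Apéry elements are pairwise $\preceq$-incomparable, while $0$ lies below each of them, so the set of maximal elements is exactly $\{p^2+1,\dots,p^2+p-1\}$. Using the standard description (see \cite{rg}) of the pseudo-Frobenius numbers as $\{w-p \mid w \text{ maximal in } \Ap(S,p)\}$, this gives
\[ PF(S)=\{p^2-p+1,\,p^2-p+2,\,\dots,\,p^2-1\}, \]
a set of cardinality $p-1$.

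Finally I would verify that $BPF(S)=PF(S)$, i.e.\ that every pseudo-Frobenius number exceeds $F(S)/2=(p^2-1)/2$. Since the elements of $PF(S)$ are increasing, it suffices to check the smallest one, and here $2(p^2-p+1)-(p^2-1)=(p-1)^2+2>0$. Hence $\#BPF(S)=p-1$, so Theorem~\ref{bounds}(i) produces an irreducible decomposition of $S$ with at most $p-1$ components. Combined with the lower bound of $p-1$ obtained above, the minimum number of components is exactly $p-1$.

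The lower bound costs nothing, so the only real content is the upper bound, and within it the one step that needs care is pinning down the maximal elements of the Apéry poset. Once the Apéry set is identified, the incomparability of its nonzero elements (they differ by less than the multiplicity) makes the maximality computation routine, and the concluding inequality $a>F(S)/2$ reduces to the one-line estimate $(p-1)^2+2>0$. Thus I expect no genuine obstacle beyond organizing these Apéry-set computations cleanly.
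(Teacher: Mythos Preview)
Your argument is correct and follows the same two-step plan as the paper: the lower bound from Theorem~\ref{thm:lowmultiplicity} with $k=p$, $n=p^2$, and the upper bound from Theorem~\ref{bounds}(i). The paper simply asserts that the upper bound follows from Theorem~\ref{bounds}(i) without spelling out why $\#BPF(S)=p-1$; your Ap\'ery-set computation of $PF(S)$ and the check that every pseudo-Frobenius number exceeds $(p^2-1)/2$ supply exactly the detail the paper leaves implicit.
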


\begin{proof}
The lower bound follows immediately from Theorem \ref{thm:lowmultiplicity} and the upper bound follows from Theorem \ref{bounds}(i).
\end{proof}

Next, we present our second construction.

\begin{thm}
For any integer $k$ there is an integer $n$ such that the number of irreducible components in any irreducible decomposition of $S=\{n+1, n+2, \ldots\}$ is at least $k$.
\end{thm}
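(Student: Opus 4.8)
The plan is to apply Theorem~\ref{bounds}(ii): for each $k$ it suffices to produce an integer $n$ with $h(S)\ge k$, where $S=\{0\}\cup\{n+1,n+2,\dots\}$. First I would record the invariants of $S$. Since the smallest nonzero element of $S$ is $n+1$, every gap $x\in\{1,\dots,n\}$ satisfies $x+s\ge n+2>n$ for all nonzero $s\in S$, hence $x+s\in S$; therefore $PF(S)=\{1,\dots,n\}$, $F(S)=n$, and $BPF(S)=\{a:\, n/2<a\le n\}$. Recall that $h(S)$ is the minimum of $\#\{b_1,\dots,b_m\}$ over $(b_1,\dots,b_m)\in\xi(a_1)\times\cdots\times\xi(a_m)$, where $\{a_1,\dots,a_m\}=BPF(S)$. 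Thus if I can exhibit $k$ \emph{distinct} elements $a\in BPF(S)$ for which $\xi(a)$ is the singleton $\{a\}$, then each such coordinate is forced to equal $a$, so every admissible tuple contains these $k$ distinct values and $h(S)\ge k$.

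The key computation is a divisibility criterion forcing $\xi(a)$ to collapse to a point. Writing $j=n-a$, I claim $\xi(a)=\{a\}$ whenever every $t\in\{2,\dots,j\}$ divides $a$, i.e. whenever $\lcm(1,\dots,j)\mid(n-j)$. Indeed $a\in\xi(a)$ always (take $t=0$, since $a\notin S$). For $t\ge1$ with $a+t\le n$ we have $1\le t\le j$, and if $t\mid a$ then $a+t$ is a multiple of $t$, so $a+t\in\langle t\rangle\subseteq\langle S,t\rangle$ and $a+t\notin\xi(a)$; under the hypothesis this disposes of every $t\in\{1,\dots,j\}$, the case $t=1$ being automatic. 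Finally, for $t>j$ we have $a+t>n$, whence $a+t\in S\subseteq\langle S,t\rangle$. So no $a+t$ with $t\ge1$ survives and $\xi(a)=\{a\}$. (Note only this easy direction is needed; I never have to prove that $t\nmid a$ produces a genuine new element of $\xi(a)$.)

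It remains to choose $n$ so that $k$ values meet the criterion simultaneously. I would pick a strictly increasing sequence $0=j_1<j_2<\cdots<j_k$ satisfying $\lcm(1,\dots,j_i)\mid(j_{i'}-j_i)$ for all $i<i'$; the greedy choice $j_1=0$, $j_2=1$, and $j_{i+1}=j_i+\lcm(1,\dots,j_i)$ works, since then each spacing $j_{l+1}-j_l$ is a multiple of $\lcm(1,\dots,j_i)$ for every $l\ge i$. Setting $L=\lcm(1,\dots,j_k)$, choose any $n>2j_k$ with $n\equiv j_k\pmod L$. For each $i$, the integer $\lcm(1,\dots,j_i)$ divides $n-j_i=(n-j_k)+(j_k-j_i)$: it divides $n-j_k$ because $\lcm(1,\dots,j_i)\mid L\mid(n-j_k)$, and it divides $j_k-j_i=\sum_{l=i}^{k-1}(j_{l+1}-j_l)$ term by term. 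Since $j_i\le j_k<n/2$, each $a_i:=n-j_i$ lies in $BPF(S)$, and the criterion gives $\xi(a_i)=\{a_i\}$. The $a_i$ are distinct, so $h(S)\ge k$, and Theorem~\ref{bounds}(ii) yields that every irreducible decomposition of $S$ has at least $k$ components.

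The one delicate point, and the crux of the construction, is arranging the divisibilities simultaneously. A naive attempt to force consecutive values $a,a-1,a-2,\dots$ to be singletons collapses almost immediately, because $\xi(n-2)=\{n-2\}$ requires $n$ even while $\xi(n-3)=\{n-3\}$ requires $n\equiv3\pmod6$, and these conflict. The recursive spacing $j_{i+1}-j_i=\lcm(1,\dots,j_i)$ is precisely what renders the nested congruences $n\equiv j_i\pmod{\lcm(1,\dots,j_i)}$ mutually compatible, so that a single residue class of $n$ realizes all $k$ singletons at once; verifying this compatibility is the main work of the proof.
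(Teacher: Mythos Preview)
Your proof is correct and follows essentially the same approach as the paper: both compute $BPF(S)$, then exhibit $k$ elements $n-j_i\in BPF(S)$ with $\xi(n-j_i)=\{n-j_i\}$ via a divisibility condition, using a recursively spaced sequence to make the congruences compatible, and conclude by Theorem~\ref{bounds}(ii). The only cosmetic differences are that the paper uses factorials (setting $a_i=a_{i-1}!+a_{i-1}$) in place of your $\lcm$'s, and verifies the singleton property by backward induction rather than via your explicit criterion; your version yields slightly smaller admissible $n$, but the argument is otherwise the same.
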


\begin{proof}
Note that $\{1, 2, \ldots, n\}$ is the set of pseudo-Frobenius numbers of $S$ and $F(S)=n$. This yields that
\begin{align*}
BPF(S)=\{\lfloor n/2\rfloor+1, \lfloor n/2\rfloor+2, \ldots, \}.
\end{align*}

We define a sequence $a_1, \ldots, a_k$ as follows. Set $a_1:=1$ and $a_i:=a_{i-1}!+a_{i-1}$ for each integer $i=2, \ldots, k$. We may choose $n$ large enough such that $n-a_k\geq \lfloor n/2\rfloor+1$ (and so $n-a_1, \ldots, n-a_k\in BPF(S)$.) Moreover, we may assume that $n-a_k$ is divisible by $a_k!$. For each $i=1,2, \ldots k$, consider the set$$\xi(n-a_i)=\{n-a_i+t: t\in \mathbb{N} \ {\rm and} \ n-a_i+t\notin \langle S,t\rangle\}.$$Using a backward induction, we show that $\xi(n-a_i)$ is a singleton for each $i=1,2, \ldots k$. First, suppose that $i=k$. If there is a nonzero number $t\in \mathbb{N}$ such that $n-a_k+t\notin \langle S,t\rangle$, then $t\leq a_k$ and $n-a_k$ is not divisible by $t$. This contradicts the choice of $n$ (as $n-a_k$ is divisible by any natural number less than or equal to $a_k$). Thus $t=0$ and so $\xi(n-a_i)=\{n-a_i\}$. Now, assume that $i\leq k-1$. If $n-a_i+t\notin \langle S,t\rangle$, for some $t\in \mathbb{N}\setminus \{0\}$, then $t\leq a_i$ and $n-a_i$ is not divisible by $t$. It follows from $t\leq a_i$ that $t$ divides $a_i!$. Since $n-a_i$ is not divisible by $t$, we deduce that $n-a_i-a_i!=n-a_{i+1}$ is not divisible by $t$. Hence, $n-a_{i+1}+t\notin \langle S,t\rangle$. Consequently, $n-a_{i+1}+t\in \xi(n-a_{i+1})$ which contradicts the induction hypothesis. Therefore, $t=0$ and $\xi(n-a_i)=\{n-a_i\}$. Consequently, the quantity $h(S)$, as defined in Theorem \ref{bounds}(ii), is at least $k$ and the assertion follows from that theorem.
\end{proof}

\section{Acknowledgements}
Seyed Amin Seyed Fakhari was supported by his FAPA grant from Universidad de los Andes.
%%%%%%%%%%%%%%%%%%%%%%%%%%%%%%%%%%%%%%%%%%%%%%%%%%%%%%%%%%%%%%%%%%%%%%%%%%

%%%%%%%%%%%%%%%%%%%%%%%%%%%%%%%%%%%%%%%%%%%%%%%%%%%%%%%%%%%%%%%%%%%%%%%%%%

\end{document}